\documentclass{article}
\usepackage[utf8]{inputenc}
\usepackage{amsthm, amssymb, amsmath}
\usepackage{xcolor}
\usepackage{authblk, verbatim}

\title{Conflict-Free Coloring of Star-Free Graphs on Open Neighborhoods}
\author{Sriram Bhyravarapu}
\author{Subrahmanyam Kalyanasundaram}
\author{Rogers Mathew}
\affil{Department of Computer Science and Engineering, \authorcr
Indian Institute of Technology Hyderabad, India - 502285. \authorcr {\tt \{cs16resch11001, subruk, rogers\}@iith.ac.in}
}
%\date{August 2020}

\theoremstyle{definition}

\theoremstyle{plain}
\newtheorem{theorem}{Theorem}
\newtheorem{lemma}[theorem]{Lemma}
\newtheorem{corollary}[theorem]{Corollary}

\newtheorem{definition}[theorem]{Definition}

\theoremstyle{remark}

\newcommand{\cfon}{CFON}
\newcommand{\chion}{\chi_{ON}(G)}
\newcommand{\cf}{CFON}

\begin{document}

\maketitle

\begin{abstract}
Given a graph, the conflict-free coloring problem on open neighborhoods (CFON)
asks to color the vertices of the graph so that all the vertices have a
uniquely colored vertex in its open neighborhood. The smallest number of 
colors required for such a coloring is called the conflict-free chromatic number
and denoted $\chion{}$.
%The conflict-free coloring  
%problem is NP-complete
%and many variants have been studied. 
In this note, we study this problem on 
$S_k$-free graphs where $S_k$ is a star on $k+1$ vertices. 
%    For a graph $G$ with maximum degree $\Delta$,  it is known that $\chion \leq \Delta + 1$ and this bound is tight in general. 
When $G$ is $S_k$-free, we show that  
  $\chion = O(k\cdot \log^{2+\epsilon}\Delta)$, for any $\epsilon > 0$, 
  where $\Delta$ denotes the maximum degree of $G$.
%In particular, when $G$ is claw-free, this implies that $\chion = O(\log^{2 + \epsilon}\Delta)$. 
Further, we show existence of claw-free ($S_3$-free) graphs that require $\Omega(\log \Delta)$ colors. 
\end{abstract}

A \emph{conflict-free coloring} of a hypergraph $H=(V,E)$, denoted by $\chi_{CF}(H)$, is an assignment of colors to the points in $V$ such that every $e \in E$ contains a point whose color is distinct from that of every other point in $e$.  
The \emph{conflict-free chromatic number} of $H$, denoted $\chi_{CF}(H)$, is the smallest 
number of colors required for such a coloring.
Introduced \cite{Even2002}  in 2002 by Even, Lotker, Ron and Smorodinsky, many variants of the problem
have been extensively studied \cite{smorosurvey}.
Conflict-free colorings have also been studied in the context of hypergraphs created out of graphs. One such popular variant is  conflict-free coloring with respect to open neighborhoods (or CFON coloring) in a graph. 
Given a graph $G$, for any vertex $v \in V(G)$, let $N_G(v) : = \{u \in V(G)~:~\{u,v\} \in E(G)\}$ denote the \emph{open neighborhood} of $v$ in $G$. 
%We will use $\Delta$ to denote the maximum degree, which is $\max_{v} |N_G(v)|$.
\begin{definition}[CFON Coloring of Graphs] \label{defn:open_CF}
Given a graph $G$, let $H$ be  the hypergraph with $V(H) = V(G)$ and $E(H) = \{N_G(v):v \in V(G)\}$. 
A conflict-free coloring on open neighborhoods (CFON coloring) of $G$ is defined
as a conflict-free coloring of $H$.
The CFON chromatic number of $G$, denoted by $\chi_{ON}(G)$, is equal to $\chi_{CF}(H)$. 
\end{definition} 

%The star graph on $k+1$ vertices, denoted $S_k$, is the complete bipartite graph $K_{1,k}$.
%, a tree with one internal node and $k$ leaf nodes. 
%A graph is \emph{$S_k$-free} if it does not contain an $S_k$ as an induced subgraph. 

%We show  that $\chion = O(k\cdot \log^{2+\epsilon}\Delta)$, for any $\epsilon > 0$.

%\section{Star-free graphs}
%\label{sec:star_free}

\begin{definition}[Star and Claw]
The complete bipartite graph $K_{1,k}$ is referred to as \emph{star on $k+1$ vertices} and denoted by $S_k$.
The graph $S_3$ is also known by the name \emph{claw}.
\end{definition}
A graph is said to be \emph{$S_k$-free} (\emph{claw-free}) if 
it does not contain an $S_k$ ($S_3$) as an induced subgraph. 
In this paper, we study the \cfon{} problem on $S_k$-free graphs. For a graph $G$ with maximum degree $\Delta$,  it is known that $\chion \leq \Delta + 1$ and this bound is tight in general. 
We improve this result for $S_k$-free graphs 
%Suppose $G$ is $S_k$-free, 
%where $k$ is a constant. 
for most values of $k$.
\begin{theorem}
\label{thm:S_k-free}
Let $G$ be an $S_k$-free graph with maximum degree $\Delta$. Then, $\chion = O(k\log^{2+\epsilon}\Delta)$, for any $\epsilon > 0$.  
\end{theorem}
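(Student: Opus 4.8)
The plan is to use a randomized coloring argument, which is the standard and most powerful tool for obtaining polylogarithmic bounds on conflict-free chromatic numbers. The key structural fact I would exploit is that $S_k$-freeness limits the size of any independent set inside a neighborhood: if $G$ is $S_k$-free, then for every vertex $v$, the subgraph induced on $N_G(v)$ contains no independent set of size $k$, since such an independent set together with $v$ would form an induced $S_k$. This means each open neighborhood, viewed as a graph, has independence number at most $k-1$, and hence (by a greedy covering / Ramsey-type argument) can be covered by relatively few cliques, or equivalently, large neighborhoods must contain large cliques. The rough idea is that a large neighborhood cannot be ``spread out'': it is densely connected, which we will leverage to recolor efficiently.

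First, I would set up a probabilistic coloring. Assign each vertex a color chosen from a palette in multiple rounds, or equivalently partition the color assignment into $O(\log \Delta)$ levels. At a high level, I would aim to show that a random coloring with $O(k \cdot \mathrm{polylog}\,\Delta)$ colors succeeds with positive probability via the Lov\'asz Local Lemma (LLL). The ``bad event'' for a vertex $v$ is that no color appears exactly once in $N_G(v)$. To bound the probability of this bad event, I would split $N_G(v)$ according to degree: vertices of small degree are handled directly, while for the dense part I would use the clique structure forced by $S_k$-freeness. Specifically, since $N_G(v)$ has bounded independence number, I can extract a clique of size roughly $|N_G(v)|/(k-1)$ (or cover $N_G(v)$ by few cliques), and within a clique the colors interact strongly, making it easier to guarantee a uniquely colored vertex. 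The factor $\log^{2+\epsilon}\Delta$ strongly suggests an iterative or two-phase scheme: one $\log \Delta$ factor from partitioning vertices into degree classes (there are $O(\log \Delta)$ distinct scales of degree), and another $\log^{1+\epsilon}\Delta$ factor coming from the probability bound needed for the LLL to close.

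The concrete steps I would carry out are: (1) Prove the structural lemma that in an $S_k$-free graph every neighborhood has independence number $< k$, and derive a clique-cover bound. (2) Partition vertices into $O(\log \Delta)$ buckets by degree scale and, within each scale, set up a random coloring using a palette of size $O(k \log^{1+\epsilon}\Delta)$. (3) Define the bad events and bound their probabilities, using the clique structure to show the failure probability for each vertex is small, ideally inverse-polynomial in $\Delta$. (4) Verify the LLL dependency condition: each bad event for $v$ depends only on the colors of vertices within distance two of $v$, so the dependency degree is $\mathrm{poly}(\Delta)$, and check that $e \cdot p \cdot (d+1) \le 1$ holds with the chosen palette size. (5) Combine the colorings across the $O(\log \Delta)$ degree scales, multiplying palettes (or using disjoint palettes) to get the final $O(k \log^{2+\epsilon}\Delta)$ bound.

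The main obstacle, I expect, will be step (3): bounding the probability that a neighborhood fails to have a uniquely colored vertex, \emph{sharply enough} to beat the $\mathrm{poly}(\Delta)$ dependency degree in the LLL. A naive random coloring gives a uniquely colored vertex with constant probability, which is far too weak; the challenge is to amplify this to an inverse-polynomial failure probability while spending only $\mathrm{polylog}\,\Delta$ colors. This is precisely where the clique structure must do real work: in a clique, one can use concentration (e.g., a balls-in-bins / Poissonization argument) to argue that with high probability some color class restricted to the clique has size exactly one. The extra $\log^{\epsilon}\Delta$ factor almost certainly arises from the slack needed to push a constant-probability event up to the required high-probability regime, and getting the exponents to line up — balancing palette size against failure probability against dependency degree — will be the delicate technical core of the argument.
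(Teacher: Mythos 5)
Your proposal is a plan rather than a proof, and the step you yourself flag as the main obstacle --- amplifying the constant probability of ``some color appears exactly once in $N(v)$'' to an inverse-polynomial failure probability using only $\mathrm{polylog}\,\Delta$ colors --- is exactly the step that is left unresolved, and it is the heart of the matter. The paper does not solve this from scratch; it invokes Theorem 1.2 of Pach and Tardos, which says that a hypergraph whose edges all have size at least $2t-1$ and each of which meets at most $\Gamma$ others can be conflict-free colored with $O(t\Gamma^{1/t}\log\Gamma)$ colors. The point is that \emph{large hyperedges} are what permit the amplification: taking $t=\Theta(\log\Gamma)$ makes $\Gamma^{1/t}$ a constant, giving $O(\log^2\Delta)$ colors for the vertices of degree greater than $\log\Delta$ (here $\Gamma=\Delta^2$). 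The paper then recurses on the remaining low-degree vertices with threshold $\log\log\Delta$, and so on for $\log^*\Delta$ rounds, taking a Cartesian product of the colorings; the product $\prod_i(\log^{(i)}\Delta)^2$ is what yields $\log^{2+\epsilon}\Delta$. So the quantitative shape is different from your guess of ``one $\log$ from degree scales, one $\log^{1+\epsilon}$ from the LLL'': the $\log^2$ comes entirely from the first round, and the $\epsilon$ absorbs the iterated product.

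Your use of $S_k$-freeness is also misplaced. In the paper the hypothesis enters only at the very end: after the recursion bottoms out, the leftover vertices form an independent set $A$, and a vertex $u\notin A$ can have at most $k-1$ neighbors in $A$ (else $u$ together with $k$ independent neighbors is an induced $S_k$). Hence the hypergraph with edges $\{N_G(v)\cap B : v\in A\}$ has maximum vertex degree $k-1$ and can be conflict-free colored with $k$ colors by the elementary $\Delta+1$ bound; this is where the factor $k$ comes from. Your clique-based route has two problems. First, the quantitative claim is wrong: independence number at most $k-1$ in $N_G(v)$ gives, via Ramsey, only a clique of size about $|N_G(v)|^{1/(k-1)}$, not $|N_G(v)|/(k-1)$, and neighborhoods with small independence number need not admit small clique covers. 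Second, even granting a large clique inside $N_G(v)$, a color appearing exactly once in that clique need not appear exactly once in all of $N_G(v)$, so the clique structure does not by itself produce the uniquely colored neighbor. To repair the proposal you would need either to prove the amplification lemma for large hyperedges directly (essentially reproving Pach--Tardos) or to cite it, and to relocate the $S_k$-freeness to the independent-set layer where it actually does work.
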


The following Theorems \ref{thm:pach_tardos_1} and \ref{thm:pach_main} from \cite{Pach2009} will be used in the proof of Theorem \ref{thm:S_k-free}.

\begin{theorem}[Theorem 1.1 in \cite{Pach2009}]
\label{thm:pach_tardos_1}
Let $H$ be a hypergraph and let $\Delta$ be the maximum degree of a vertex in $H$. Then the conflict-free chromatic number of $H$ is  at most $\Delta + 1$. 
This bound is optimal and the corresponding coloring can be found in linear deterministic time. 
\end{theorem}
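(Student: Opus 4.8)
The plan is to prove the upper bound $\chi_{CF}(H) \le \Delta + 1$ by induction on the number of vertices of $H$ (with the degree of a vertex understood as the number of hyperedges containing it), deleting a single vertex at each step, and then to read off the algorithmic and optimality claims from the construction. The crux of the whole argument is how to define the smaller hypergraph: rather than deleting a vertex $v$ together with all hyperedges through it, I would delete only $v$, replacing every hyperedge $e$ by its truncation $e \setminus \{v\}$ (discarding truncations that become empty and identifying any duplicates). Since deleting $v$ cannot raise the degree of any other vertex, the truncated hypergraph $H' = H - v$ again has maximum vertex degree at most $\Delta$, so the induction hypothesis supplies a conflict-free coloring of $H'$ using colors from $\{1, \dots, \Delta+1\}$.

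Next I would add $v$ back and argue that a single color for $v$ can be chosen without spoiling conflict-freeness. Hyperedges avoiding $v$ are untouched and stay conflict-free. For a hyperedge $e \ni v$, its truncation $e \setminus \{v\}$ was an edge of $H'$ and hence already contains a vertex whose color is unique within $e \setminus \{v\}$. An elementary case analysis then shows that $e$ can fail to be conflict-free only if $e \setminus \{v\}$ has \emph{exactly one} such uniquely colored vertex and we happen to give $v$ precisely that vertex's color; in every other case (two or more uniquely colored vertices in $e \setminus \{v\}$, or a color for $v$ not present in $e \setminus \{v\}$) the edge $e$ stays conflict-free. Thus each hyperedge through $v$ forbids at most one color for $v$.

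I would finish by counting: $v$ lies in at most $\deg_H(v) \le \Delta$ hyperedges, so at most $\Delta$ colors are forbidden, leaving at least one admissible color in a palette of size $\Delta + 1$; assigning it to $v$ yields a conflict-free coloring of $H$. Because each step only inspects the at most $\Delta$ edges through the deleted vertex and the uniquely colored vertex of each truncation, fixing an elimination order and coloring the vertices back in reverse runs in linear deterministic time. For optimality I would exhibit $K_{\Delta+1}$ as a $2$-uniform hypergraph: a pair-edge is conflict-free exactly when its two endpoints receive different colors, so here a conflict-free coloring is precisely a proper coloring, and $K_{\Delta+1}$ (every vertex of degree $\Delta$) needs $\Delta + 1$ colors.

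The main obstacle is exactly the modeling choice highlighted above. If one deletes the edges through $v$ outright, then a large edge $e$ whose co-neighbours of $v$ carry no uniquely colored vertex could forbid up to $|e|-1$ colors, destroying any degree-based count. Retaining the \emph{truncated} edges in $H'$ is what forces each truncation to already possess a uniquely colored vertex, collapsing the number of colors forbidden by $e$ from potentially many down to at most one. I would take care over the degenerate cases here — singleton edges $\{v\}$ (whose truncation is empty, and which are automatically conflict-free once $v$ is colored) and coincidences among distinct truncated edges — so that the maximum degree of $H'$ is genuinely bounded by $\Delta$ and the induction closes cleanly.
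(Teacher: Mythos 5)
Your proposal is correct, and there is nothing in the paper to compare it against: the paper imports this statement by citation as Theorem~1.1 of \cite{Pach2009} and gives no proof of its own. Your argument — delete a vertex $v$ but keep the \emph{truncated} edges $e \setminus \{v\}$, so that by induction each truncation already has a uniquely colored vertex, whence each of the at most $\Delta$ edges through $v$ forbids at most one color (the color of the sole uniquely colored vertex of its truncation, and only when that vertex is unique) — is essentially the standard greedy/inductive proof of Pach and Tardos, run as a reverse elimination order rather than a single forward pass, and your handling of the degenerate cases (singleton edges, coinciding truncations) correctly keeps the maximum degree of $H - v$ bounded by $\Delta$. The tightness witness $K_{\Delta+1}$ viewed as a $2$-uniform hypergraph, where conflict-freeness coincides with proper coloring, is also valid; note it differs from the tight example the paper records for the graph CFON setting (the subdivided clique $K_n^*$), which is needed there because the open-neighborhood hypergraph of a graph is a more restricted object than an arbitrary hypergraph.
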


Let $G$ be a graph with maximum degree $\Delta$. 
The above theorem %Theorem \ref{thm:pach_tardos_1}  due to Pach and Tardos \cite{Pach2009} 
implies that 
$\chion \leq \Delta + 1$. 
The subdivided clique $K_n^*$ has a maximum degree of $n-1$ and satisfies $\chi_{ON}(K_n^*)=n$.  
This serves as a tight example to the above bound.

\begin{theorem}[Theorem 1.2 in \cite{Pach2009}]
\label{thm:pach_main}
For any positive integers $t$ and $\Gamma$, the conflict-free chromatic number of any hypergraph in which each edge is of size at least $2t-1$ and each edge intersects at most $\Gamma$ others is $O(t\Gamma^{1/t}\log \Gamma)$. There is a randomized polynomial time algorithm to find such a coloring.   
\end{theorem}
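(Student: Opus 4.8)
The plan is to prove the statement by the probabilistic method, using the symmetric Lov\'asz Local Lemma (LLL) on a random coloring. I would color every vertex of $H$ independently and uniformly at random with one of $m$ colors, where $m$ is to be chosen of order $t\Gamma^{1/t}\log\Gamma$. For each edge $e$, let $A_e$ be the bad event that no color occurs exactly once in $e$ (equivalently, $e$ has no uniquely colored vertex). The event $A_e$ is determined solely by the colors of the vertices in $e$, so $A_e$ is mutually independent of the family $\{A_{e'} : e'\cap e=\emptyset\}$. Since each edge meets at most $\Gamma$ others, the dependency degree of this event system is at most $\Gamma$. Hence, by the symmetric LLL, it suffices to show $\Pr[A_e]\le \tfrac{1}{e(\Gamma+1)}$ for every $e$; the LLL then guarantees a coloring avoiding all $A_e$, which is exactly a conflict-free coloring.

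The core step is the estimate on $\Pr[A_e]$ for an edge whose size lies in the base regime, say $|e|=2t-1$. If $A_e$ occurs then every color appearing in $e$ appears at least twice, so the $2t-1$ vertices of $e$ receive at most $t-1$ distinct colors. Union-bounding over the choice of the color set gives
\[ \Pr[A_e] \;\le\; \binom{m}{t-1}\Big(\frac{t-1}{m}\Big)^{2t-1} \;\le\; \frac{1}{e}\Big(\frac{e(t-1)}{m}\Big)^{t}. \]
Requiring the right-hand side to be at most $\tfrac{1}{e(\Gamma+1)}$ forces $m=\Theta\big(t(\Gamma+1)^{1/t}\big)=O(t\Gamma^{1/t})$, which already yields the $t\Gamma^{1/t}$ factor (without the logarithm) for hypergraphs all of whose edges have size close to $2t-1$.

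The remaining issue --- and the step I expect to be the main obstacle --- is that the minimum edge size is $2t-1$ but edges may be arbitrarily large, and for an edge with $|e|\gg m$ a uniform random coloring almost surely makes every color repeat, so $\Pr[A_e]\to 1$ and the single-shot estimate breaks down. To handle all sizes simultaneously I would introduce a thinning / multi-palette refinement: split the color budget into $L=\Theta(\log\Gamma)$ levels, each equipped with a fresh palette of $O(t\Gamma^{1/t})$ colors, and at each level independently retain every still-uncolored vertex with a suitable probability before coloring the retained ones from that level's palette. Because the palettes are disjoint, a color that is unique among the vertices of $e$ activated at a given level is automatically unique within all of $e$, so it suffices that some level sees the active vertices of $e$ in the good range (between $2t-1$ and $O(t\Gamma^{1/t})$), where the base estimate applies. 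The delicate points are to choose the retention probabilities so that every edge --- whatever its size --- meets such a favourable level, and to keep the resulting bad events' dependency degree polynomially bounded in $\Gamma$, so that after the $\Theta(\log\Gamma)$ levels drive the per-edge failure probability below $\tfrac{1}{e(\Gamma+1)}$ the symmetric LLL still applies. This interplay between edge size, the number of levels, and the LLL slack is where the $\log\Gamma$ factor is born and where the bookkeeping is heaviest.

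Finally, to obtain the claimed randomized polynomial-time algorithm I would replace the existential LLL by its constructive version (the Moser--Tardos resampling algorithm): the bad events $A_e$ are efficiently checkable, each depends on the colors of boundedly many vertices, and the dependency graph is explicit, so the resampling procedure converges in expected polynomial time to a conflict-free coloring using $O(t\Gamma^{1/t}\log\Gamma)$ colors.
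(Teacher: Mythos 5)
First, note that the paper you were asked about does not prove this statement at all: it is Theorem 1.2 of Pach and Tardos, quoted verbatim from \cite{Pach2009} and used as a black box, so your attempt can only be measured against the argument in that reference. Your opening moves are sound and match the spirit of that argument: coloring i.i.d.\ uniformly, defining $A_e$ as ``no color appears exactly once in $e$,'' observing mutual independence from $\{A_{e'} : e' \cap e = \emptyset\}$ so the dependency degree is at most $\Gamma$, and the counting estimate for $|e| = 2t-1$ are all correct --- indeed $\binom{m}{t-1}\bigl(\frac{t-1}{m}\bigr)^{2t-1} \le \bigl(\frac{em}{t-1}\bigr)^{t-1}\bigl(\frac{t-1}{m}\bigr)^{2t-1} = \frac{1}{e}\bigl(\frac{e(t-1)}{m}\bigr)^{t}$, forcing $m = \Theta(t\Gamma^{1/t})$. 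You also correctly identify large edges as the crux.

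However, the fix you sketch for large edges cannot work as stated, and this is a genuine gap rather than heavy bookkeeping. Your thinning scheme --- $L = \Theta(\log\Gamma)$ levels, fixed retention probabilities, disjoint palettes of size $m = O(t\Gamma^{1/t})$ --- still produces a \emph{product measure}: each vertex independently receives a color from a fixed finite palette of $K = O(t\Gamma^{1/t}\log\Gamma)$ colors according to one fixed distribution $(p_c)_{c \le K}$. For any such measure and any edge with $|e| = s \to \infty$, every color $c$ with $p_c > 0$ satisfies $\Pr[\mathrm{Bin}(s,p_c) = 1] \to 0$, so $\Pr[A_e] \to 1$. Concretely, the ``favourable window'' at level $\ell$ (active count between $2t-1$ and $O(m)$) captures only edge sizes $s$ in a fixed multiplicative range depending on $\ell$, and the union over $\ell \le L$ covers sizes up to roughly $m \cdot 2^{\Theta(L)} = \mathrm{poly}(\Gamma)\cdot t$; an edge larger than this has far too many active vertices at \emph{every} level, so no choice of retention probabilities makes ``every edge, whatever its size, meets a favourable level'' true. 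Since the symmetric Local Lemma requires $\Pr[A_e] \le \frac{1}{e(\Gamma+1)}$ for \emph{every} edge --- and Moser--Tardos likewise requires a product measure --- the entire single-shot LLL framework you propose provably cannot yield the theorem in its stated generality, where edge sizes are unbounded in terms of $\Gamma$ and $t$; handling such edges requires an ingredient beyond one independent random coloring, and this is exactly where the proof in \cite{Pach2009} does more than your plan allows for. It is worth noting that for the application in the present paper your scheme would in fact suffice: there the hyperedges are (pieces of) open neighborhoods, so their sizes are at most $\Delta \le \mathrm{poly}(\Gamma)$ and fall inside the covered window --- but that rescues the corollary, not the quoted theorem.
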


 We begin with an auxiliary lemma. 

\begin{lemma}
\label{lem:aux_star_free}
Let $G$ be an $S_k$-free graph with no isolated vertices. Let $A \subseteq V(G)$ be an independent set of vertices in $G$. Let $B = V(G) \setminus A$. There is a way to color the vertices in $B$ using at most $k$ colors such that every vertex in $A$ sees some color appear exactly once in its open neighborhood. 
\end{lemma}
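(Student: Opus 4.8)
The plan is to reformulate the statement as a conflict-free coloring of a suitable hypergraph and then apply Theorem~\ref{thm:pach_tardos_1}. I would define the hypergraph $\mathcal{H}$ with vertex set $B$ and edge set $\{N_G(a) : a \in A\}$. Since $A$ is independent, every neighbor of a vertex $a \in A$ lies in $B$, so each $N_G(a)$ is indeed a subset of $B$; and since $G$ has no isolated vertices, each such edge is nonempty. A conflict-free coloring of $\mathcal{H}$ is precisely an assignment of colors to $B$ in which every $a \in A$ sees a color appearing exactly once in $N_G(a)$, which is exactly the coloring the lemma asks for. Hence it suffices to bound $\chi_{CF}(\mathcal{H})$.

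The key step, and the only place where $S_k$-freeness enters, is to bound the maximum degree of $\mathcal{H}$. The degree of a vertex $b \in B$ in $\mathcal{H}$ is the number of edges $N_G(a)$ containing $b$, i.e. the number of $a \in A$ with $b \in N_G(a)$, which equals $|N_G(b) \cap A|$. I claim this is at most $k-1$. Indeed, $N_G(b) \cap A$ is an independent set (as $A$ is independent) all of whose vertices are adjacent to $b$; so if it contained $k$ vertices, then $b$ together with any $k$ of them would induce a copy of $K_{1,k} = S_k$, contradicting $S_k$-freeness. Therefore every vertex of $B$ lies in at most $k-1$ edges of $\mathcal{H}$, so the maximum degree of $\mathcal{H}$ is at most $k-1$.

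Finally I would invoke Theorem~\ref{thm:pach_tardos_1}: a hypergraph of maximum degree $\Delta'$ has conflict-free chromatic number at most $\Delta'+1$. Applying this with $\Delta' \le k-1$ gives $\chi_{CF}(\mathcal{H}) \le k$, and the corresponding conflict-free coloring of $\mathcal{H}$ is exactly the desired $k$-coloring of $B$.

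In terms of difficulty, essentially all the content lies in recognizing the right hypergraph and in the degree bound of the second paragraph; once the problem is cast as bounding the degree of $\mathcal{H}$, the rest is an immediate application of Theorem~\ref{thm:pach_tardos_1}. The one point I would be careful about is the direction of the incidence---translating ``$b$ lies in the edge $N_G(a)$'' into ``$a \in N_G(b) \cap A$''---since it is this translation that converts a bound on how many $A$-vertices a single $B$-vertex can dominate into a bound on the hypergraph's \emph{vertex} degree. I would therefore present that translation explicitly rather than leaving it implicit.
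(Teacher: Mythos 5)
Your proposal is correct and follows essentially the same route as the paper: build the hypergraph on vertex set $B$ with hyperedges given by the neighborhoods of vertices in $A$, bound its maximum degree by $k-1$ via $S_k$-freeness, and apply Theorem~\ref{thm:pach_tardos_1}. The only difference is that you spell out the degree-bound argument and the incidence translation, which the paper leaves as a one-line assertion.
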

\begin{proof}
Construct a hypergraph $H=(V,E)$ with $V(H) = B$ and $E(H) = \{N_G(v) \cap B~:~v \in A\}$. Since $G$ is $S_k$-free, no element of $V(H)$ is present in more than $k-1$ hyperedges. From Theorem \ref{thm:pach_tardos_1}, we  have a conflict free coloring of $H$ using $k$ colors. 
\end{proof}

\begin{proof}[Proof of Theorem \ref{thm:S_k-free}]
We use an iterative process to color the vertices. 
Consider an $S_k$-free graph $G = (V, E)$ with maximum degree $\Delta = \Delta(G)$.
We first partition $V = V_0$ into $U_1$ and $V_1$, where $U_1 = \{ 
v \in V_0 : \deg_{G[V_0]}(v) > \log \Delta\}$, and $V_1 = V_0 \setminus U_1$. We construct a hypergraph $H_1$ from $G$ with 
$V(H_1) = V_0$ and $E(H_1) = \{N_{G}(v) \cap V_0 : v\in U_1\}$.
Every hyperedge $e \in E(H_1)$ satisfies $|e| \geq \log \Delta + 1$, and $e$ intersects at most $\Delta^2$ other hyperedges in $H_1$.  
Applying Theorem \ref{thm:pach_main} with $t=\frac{\log\Delta}{2}+1$ and $\Gamma = \Delta^2$, we get the conflict-free chromatic number of $H_1$ to be at most $\alpha(\log\Delta)^2$, where $\alpha>0$ is some constant. 
That means, there is an assignment $C_1: V_0 \rightarrow [\alpha(\log\Delta)^2]$ such that every vertex in $U_1$  sees
some color exactly once in its open neighborhood.

Now notice that $V_1$ is the set of all vertices that have degree
at most $\log \Delta$. We repeat the above process by setting
$U_2 =  \{ v \in V_1 : \deg_{G[V_1]}(v) > \log \Delta(G[V_1]) \geq \log \log \Delta\}$, and $V_2 = V_1 \setminus U_2$.
We construct a hypergraph $H_2$ with $V(H_2) = V_1$ and 
$E(H_2) = \{N_G(v) \cap V_1:v\in U_2\}$. 
Every hyperedge in $H_2$ is of size at least $\log\log\Delta + 1$ and each hyperedge 
intersects at most $\log^2\Delta$ other hyperedges. Applying Theorem \ref{thm:pach_main} with $t = \frac{\log\log\Delta}{2} + 1$ and $\Gamma = \log^2\Delta$,  we get 
an assignment $C_2: V_1 \rightarrow [\alpha(\log\log\Delta)^2]$ such that every vertex in $U_2$ sees
some color exactly once in its open neighborhood.

We iterate in this manner till we get, say $V_r$, which is an independent
set in $G$.
We now use Lemma \ref{lem:aux_star_free} to assign 
$C_{r+1}: V \rightarrow [k]$ so that every vertex in $V_r$
sees a color exactly once in its open neighborhood.

Now consider the color assignment $C$ formed by the Cartesian product
of the previous\footnote{Values $C_i(v)$ that are not assigned
are notionally set to 0.} 
assignments. That is $C(v) =(C_1(v), C_2(v), \ldots, C_r(v), C_{r+1}(v))$.
Notice that $C$ is a CFON coloring. This is because $V = U_1 \cup U_2 \cup \ldots \cup U_{r} \cup V_r$ is a partition of $V$. 
If $v\in U_i$, then $v$ has a neighbor that is uniquely colored by 
the assignment $C_i$. 
Also, every $v \in V_r$ has a 
neighbor that is uniquely colored by 
the assignment $C_{r+1}$. 
The number of colors used is 
$$ (\alpha (\log \Delta)^2) \cdot (\alpha (\log \log \Delta)^2) \cdot \dots \cdot (\alpha (\underbrace{\log \log \ldots \log}_{r \text{ times}} \Delta)^2) \cdot k\;,$$
which is upper bounded by $k \log^{2 + \epsilon} \Delta$.
This follows by noting that $r \leq \log^* \Delta$, the iterated logarithm of $\Delta$. Thus we have $\chion{} = O(k \log^{2 + \epsilon} \Delta)$.
\end{proof}

\noindent\textbf{Algorithmic note:} It is easy to see that the construction of sets $V_i$ and $U_i$ can be done in deterministic polynomial time. Theorem \ref{thm:pach_tardos_1}
states that the coloring $C_{r+1}$ 
can be computed in
deterministic linear time.
What is left is to know whether the colorings $C_i$ ($1 \leq i \leq r$) can be computed in deterministic polynomial time. Theorem \ref{thm:pach_main} states that the colorings $C_i$ ($1 \leq i \leq r$) can be obtained in 
randomized polynomial time. In the proof of Theorem \ref{thm:pach_main} in \cite{Pach2009}, an algorithmic version of the Local Lemma is used to obtain a randomized algorithm for finding the desired coloring for the hypergraph under consideration. There are  deterministic algorithms known for the Local Lemma \cite{lllcgh,harris2019deterministic} which can be used in place
of the randomized algorithm used in \cite{Pach2009}. 
By applying Theorem 1.1 (1) from \cite{harris2019deterministic}, 
we get a deterministic polynomial time algorithm to find 
 the colorings $C_i$ ($1 \leq i \leq r$).
However, the deterministic version of Local Lemma causes us to
use $O(t\Gamma^{(1+\delta)/{t}}\log \Gamma)$ colors for a 
constant $\delta>0$. This is slightly worse than the bound in 
Theorem \ref{thm:pach_main}. However, 
this weaker bound suffices to get a conflict-free coloring of the hypergraphs $H_0, H_1, \ldots$ using asymptotically the same number of colors as before. We thus have a deterministic polynomial time algorithm for \cf{} coloring the vertices of an $S_k$-free graph with maximum degree $\Delta$ using $O(k\log^{2+\epsilon}\Delta)$ colors, for any $\epsilon > 0$.

Given a graph $G$, the \emph{line graph} of $G$, denoted by $L(G)$, is the graph with $V(L(G)) = E(G)$ and $E(L(G)) = \{\{e,f\} : \mbox{edges }e \mbox{ and } f \mbox{ share an endpoint in } G\}$. It is easy to see that the line graph of any graph is claw-free. In what follows, we use this fact to show the existence of claw-free graphs of high \cf{} chromatic number.
\begin{theorem}
\label{thm:claw_free_example}
There exist claw-free graphs $G$ on $n$ vertices with $\chion{}=\Omega(\log n)$. 
\end{theorem}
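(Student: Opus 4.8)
The plan is to exploit the line-graph principle highlighted just above the statement: since $L(G')$ is claw-free for every graph $G'$, it suffices to produce a base graph $G'$ with $N$ edges for which $\chi_{ON}(L(G')) = \Omega(\log N)$, and then $L(G')$ is a claw-free graph on $n = N$ vertices with the desired bound. The first thing I would record is the translation of the objective into the language of $G'$: a CFON coloring of $L(G')$ is precisely an edge-coloring of $G'$ in which, for every edge $e = \{u,v\}$, some color appears exactly once among the edges incident to $u$ or to $v$ (other than $e$). In other words, the open neighborhood of the vertex $e$ in $L(G')$ is the union of the two stars $\mathrm{St}(u)$ and $\mathrm{St}(v)$ minus $e$, and I must design $G'$ so that these edge-neighborhoods form a conflict-free-hard family.

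The tool for the lower bound is the classical peeling argument that forces $\Omega(\log n)$ colors. I would isolate a chain of edge-neighborhoods $A_0 \supseteq A_1 \supseteq \cdots \supseteq A_r$ in which each $A_{i+1}$ is itself an open neighborhood of size at least $(|A_i|-1)/2$, with the property that at each step one of two candidate sub-neighborhoods can be chosen to avoid the uniquely-colored witness of $A_i$. The color classes witnessing uniqueness at successive levels are then pairwise distinct (each witness color is absent from all deeper sets), so any CFON coloring uses at least $r = \Omega(\log |A_0|) = \Omega(\log N)$ colors. Concretely I would build $G'$ from a dyadic or laminar hierarchy on a ground set of size $\Theta(N)$, so that a distinguished sequence of vertices of $L(G')$ supplies exactly such a geometrically shrinking chain, together with the binary choice at each level. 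Claw-freeness of the resulting graph is then automatic, and checking $n = N$ and $r = \Omega(\log N)$ is routine.

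The step I expect to be the main obstacle is the very realization of this chain inside a line graph. Open neighborhoods of $L(G')$ are unions of two stars, and two distinct stars of $G'$ share at most one edge, so neighborhoods cannot be nested as edge sets in the literal way the peeling argument wants; moreover the conflict-free witness for an edge $\{u,v\}$ can ``escape'' into whichever of $\mathrm{St}(u)$, $\mathrm{St}(v)$ is convenient. This is exactly why the most natural candidates collapse: one checks directly that $\chi_{ON}(L(K_{2,m})) = O(1)$ and, similarly, that complete or near-complete base graphs are conflict-free colorable with a bounded number of colors. My intended way around this is to force one endpoint of each edge appearing in the chain to have small, controlled degree, so that its neighborhood is essentially a single star and the escape route is closed, and to let the nesting be carried by a hierarchy of stars of geometrically decreasing sizes rather than by genuine set inclusion. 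Making this simultaneously block every escape while keeping the chain of length $\Omega(\log N)$ is the crux of the argument, and it is where I would concentrate the effort; should the two-star rigidity prove too restrictive, the fallback is a direct claw-free construction whose neighborhoods have triangle-free complement (independence number two), which affords more freedom than a line graph while still guaranteeing claw-freeness.
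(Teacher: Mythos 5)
There is a genuine gap: you never actually produce the graph, and the lower-bound mechanism you plan to use does not get realized. Your strategy is the nested-neighborhood (peeling) argument, and you yourself identify the obstruction --- open neighborhoods in a line graph are unions of two stars, two stars share at most one edge, so the geometrically shrinking chain $A_0 \supseteq A_1 \supseteq \cdots$ cannot be formed by genuine set inclusion --- but the proposal ends at exactly that obstruction, with only a sketch of a ``dyadic or laminar hierarchy'' and a vaguer fallback. As written, nothing is proved. Worse, you explicitly discard the construction that works: you assert that ``complete or near-complete base graphs are conflict-free colorable with a bounded number of colors,'' and that claim is false. The paper's example is precisely $L(K_m)$, which has $n = \binom{m}{2}$ vertices and satisfies $\chi_{ON}(L(K_m)) \geq \log m = \Omega(\log n)$.

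The paper's argument is not a peeling/nesting argument at all but a short fingerprinting (counting) argument, which sidesteps the two-star obstruction entirely. Given a CFON coloring of $L(K_m)$ with $k$ colors, i.e.\ an edge coloring $C$ of $K_m$, assign to each vertex $v$ of $K_m$ the $k$-bit vector $g(v)$ with $g_i(v)=1$ iff exactly one edge incident to $v$ has color $i$. For any edge $\{u,v\}$, its unique witness $\{v,w\}$ of some color $i$ forces every other edge at $u$ or $v$ (except possibly $\{u,v\}$ itself) to avoid color $i$; a short case check on whether $C(\{u,v\})=i$ then shows $g_i(u)\neq g_i(v)$. Hence all $m$ vectors are distinct, so $2^k \geq m$ and $k \geq \log m$. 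If you want to salvage your approach, this is the idea you are missing: rather than trying to nest neighborhoods, encode each vertex of the base graph by the set of colors appearing exactly once on its star and show the CFON condition makes this encoding injective.
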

\begin{proof}
Let $m$ be a positive integer. Consider the complete graph $K_m$ on $m$ vertices. Let $n = {m \choose 2}$ denote the number of vertices in the line graph of $K_m$. Consider the \cf{} coloring problem for the line graph of $K_m$. In other words, we need to color the edges of $K_m$ with the minimum number of colors such that every edge sees some color exactly once in its open neighborhood. 
Consider an optimal \cf{} coloring $C:E(K_m) \rightarrow \{1,2, \ldots , k\}$ of the edges of $K_m$ that uses, say $k$ colors. Below we show that $k \geq \log m$. 

 Corresponding to each $v \in K_m$, we construct a $k$-bit $0$-$1$ vector $g(v)$. The $i$-th bit $g_i(v) = 1$ if there is exactly one edge incident on $v$ with the color $i$. Otherwise, $g_i(v) = 0$. 
 Since $C$ is a valid \cf{} coloring of the edges of $K_m$, for any two distinct vertices $u,v \in V(K_m)$, $g(u)$ should differ from $g(v)$ in at least one position. 
 Consider the edge $\{u,v\}\in E(G)$. Let $\{v,w\}\in E(G)$ be the uniquely colored edge in the open neighborhood of $\{u,v\}$ and let $C(\{v,w\})=i$. This implies that none of the other edges incident on the vertices $u$ or $v$ are assigned the color $i$ except possibly the edge $\{u,v\}$ itself.
 Thus $g(u)$ and $g(v)$ differ in at least one position. 
 This implies that $k \geq \log m$.
 \end{proof}

Since a line graph is claw-free, Theorems \ref{thm:S_k-free} and \ref{thm:claw_free_example} imply the following corollary.
\begin{corollary}
\label{cor:line_graph}
Let $G$ be the line graph of a graph. Let $\Delta$ denote the maximum degree of $G$. Then,  $\chion{}=O(\log^{2+\epsilon}\Delta)$, 
for any $\epsilon>0$. Further, there exist line graphs with maximum degree $\Delta$ having 
$\chion{}=\Omega(\log \Delta)$. 
%\cfon{} chromatic number of $\Omega(\log \Delta)$.  
\end{corollary}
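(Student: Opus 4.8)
The plan is to obtain both bounds as immediate consequences of the two preceding theorems, with only a routine change of parameter required for the lower bound.

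For the upper bound, I would use the fact---noted just before Theorem~\ref{thm:claw_free_example}---that the line graph of any graph is claw-free, that is, $S_3$-free. Theorem~\ref{thm:S_k-free} therefore applies with $k=3$, yielding $\chion = O(3\cdot \log^{2+\epsilon}\Delta) = O(\log^{2+\epsilon}\Delta)$ for any $\epsilon > 0$, since the constant factor $k=3$ is absorbed into the asymptotic notation.

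For the lower bound, I would revisit the explicit family used in the proof of Theorem~\ref{thm:claw_free_example}, namely the line graphs $L(K_m)$ of the complete graphs $K_m$. That proof establishes $\chi_{ON}(L(K_m)) \geq \log m$. What remains is to re-express this bound in terms of the maximum degree $\Delta$ of $L(K_m)$ rather than the parameter $m$. To this end I would compute the degree of a vertex of $L(K_m)$: such a vertex corresponds to an edge $\{u,v\}$ of $K_m$, and its neighbors are precisely the edges incident to $u$ or to $v$ other than $\{u,v\}$ itself. Each of $u$ and $v$ meets $m-2$ further edges, and these two families are disjoint, so every vertex of $L(K_m)$ has degree $2(m-2)$. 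Hence $\Delta = 2(m-2) = \Theta(m)$, so $\log\Delta = \Theta(\log m)$, and therefore $\chi_{ON}(L(K_m)) \geq \log m = \Omega(\log\Delta)$, as required.

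I do not anticipate any substantive obstacle, as the corollary is a direct specialization of Theorems~\ref{thm:S_k-free} and~\ref{thm:claw_free_example}. The only point needing care is the translation from the vertex count $n = \binom{m}{2}$ appearing in Theorem~\ref{thm:claw_free_example} to the maximum degree $\Delta$, which the elementary degree count above resolves.
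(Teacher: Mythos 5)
Your proposal is correct and matches the paper's (implicit) argument: the paper derives the corollary directly from Theorems~\ref{thm:S_k-free} and~\ref{thm:claw_free_example} by noting that line graphs are claw-free. Your explicit degree count $\Delta = 2(m-2) = \Theta(m)$ for $L(K_m)$ is the routine translation from $n$ to $\Delta$ that the paper leaves unstated, and it is accurate.
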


\noindent \textbf{Note:} Very recently, D\c{e}bski and Przyby\l{}o \cite{dbski2020conflictfree}, 
in independent and simultaneous work, 
showed that the 
closed neighborhood conflict-free chromatic number or 
CFCN chromatic number (defined analogously to Definition \ref{defn:open_CF})
of line graphs is $O(\log\Delta)$. 
In Theorem 3 of \cite{dbski2020conflictfree}, it is shown that  
$\chi_{CN}(L(K_n)) = \Omega(\log n)$.
%CFCN chromatic number of the line graph of a complete
%graph on $n$ vertices is $\Omega(\log n)$. 
Since the CFCN chromatic number of a  graph is at most twice its \cf{} chromatic 
number, this lower bound proved in \cite{dbski2020conflictfree} implies Theorem \ref{thm:claw_free_example}.

\bibliographystyle{alpha}
\bibliography{bibfile}
\end{document}